\documentclass[11pt,a4paper]{article}

\usepackage[utf8]{inputenc}
\usepackage[T1]{fontenc}
\usepackage{lmodern}
\usepackage[english]{babel}
\usepackage{amsmath,amssymb}
\usepackage{geometry}
\usepackage{setspace}
\usepackage{indentfirst}
\usepackage{titlesec}
\usepackage[english]{babel}
\usepackage{url}
\usepackage[square,numbers]{natbib}
\usepackage{enumerate}
\usepackage{graphicx}
\usepackage{xcolor}
\usepackage{float}
\usepackage{amsthm}
\usepackage{url}

\titleformat{\section}
  {\normalfont\Large\bfseries}
  {\thesection}
  {1em}
  {}

\newtheorem{theorem}{Theorem}
\newtheorem{lemma}{Lemma}

\newtheorem{corollary}{Corollary}

\begin{document}

\begin{center}
{\LARGE
Dominance and symmetry-breaking rules\\ for the Graph Burning Problem\footnote{Preprint submitted to SBPO 2026}
\par}

\vspace{0.8cm}

{\large Nice Prado, Rafael Colares\par}

\vspace{0.5cm}

{\itshape Université Clermont Auvergne, CNRS, Clermont Auvergne INP, Mines Saint-Etienne, LIMOS, 63000 Clermont-Ferrand, France\par}
{\itshape Email: niceprado2002@gmail.com, rafael.colares\_borges@uca.fr}
\end{center} 

\vspace{0.2cm}

\begin{center}
    \begin{minipage}{0.72\textwidth}
        \begin{center}
            \textbf{Abstract}
        \end{center}
        The Graph Burning Problem (GBP) is a NP-Hard combinatorial optimization problem that models the propagation of influence or contagion in a network. The propagation is represented through the metaphor of a fire spreading through the vertices of a graph. A burning process takes place in a series of discrete time-steps. At each time step, the burning process is characterized by a \textit{propagation} (where burned nodes spread the fire to their neighbors), and an \textit{ignition} (where one additional unburned node is chosen to become burned). The minimum number of steps required to burn all vertices of a graph defines its \textit{burning number}. Literature provides integer linear programming formulations to solve the problem, but with no surprise, such approaches struggle to converge as the graph size increases. Therefore, reducing the search space explored by these formulations becomes a key point to improve performances. In this work, we study the similarities of the Graph Burning Problem with the well-known Dominating Set Problem. We propose a new formulation based on this study and apply dominance rules and symmetry-breaking techniques to reduce the search space and consequently speed up resolution time. We also introduce a perturbation of the proposed objective function, as well as a pruning rule for the perturbed model in order to further accelerate its resolution.
    \end{minipage}
\end{center}

\vspace{0.1cm}

\section{Introduction} 

The study of contagion in social networks is motivated by the need to understand how influence, information, or behaviors spread through networked interactions. To capture this phenomenon, \textit{graph burning} was introduced as a simplified model of influence propagation in networks, enabling the analysis of how quickly such contagion spread across a network \citep{bonato2014burning}. Such process takes place over a sequence of time steps. In this context, the vertices of a graph are labeled as either \textit{unburned} or \textit{burned}. At the start of the process, all vertices are said to be \textit{unburned}. At each time step, the following two procedures are sequentially applied.
\begin{enumerate}[i.]
    \item \textit{Propagation} : all \textit{burned} vertices propagate the fire to their neighbors, who become \textit{burned};
    \item \textit{Ignition} : one may choose a vertex to be ignited. The chosen vertex becomes \textit{burned}.
\end{enumerate}
A vertex that is \textit{burned} cannot become \textit{unburned}. The process continues as long as there are still \textit{unburned} vertices in the graph. The Graph Burning Problem (GBP) then consists of choosing the ignition sequence of vertices that minimizes the number of time steps required to burn the whole graph. The burning number of a graph $G$, denoted $b(G)$, is said to be $k$ if it can be completely burned in $k$ time steps, but not in $k-1$.  The lower the value of $b(G)$, the easier it is to spread such contagion in $G$. 

\subsection{Literature review}

In \cite{bessy2017burning} the decision GBP is shown to be NP-complete even when the underlying graph is restricted to the class of spider graphs, path-forests and acyclic graphs with maximum degree three. However, if the number of arms and components, respectively, are fixed, they provide polynomial time algorithms for finding the $b(G)$ of spider graphs and path-forests. For general graphs, they propose a 3-approximation polynomial-time algorithm. Heuristic approaches play an important role in addressing the GBP due to its computational hardness. For instance, \cite{garcia2025greedy} propose a deterministic greedy heuristic based on a reduction to the clustered maximum coverage problem. 
The focus of this paper is nevertheless to solve GBP exactly for any given graph and hence our literature review focuses in this part of the state-of-the-art.

In this direction, \cite{bonato2021survey} surveys the main theoretical results on graph burning, focusing on bounds and algorithms related to the burning number. In particular, they give special attention to the \emph{burning number conjecture}, which claims that any connected graph $G$ with $n$ vertices can be burned in at most $\lceil \sqrt{n} \rceil$ steps. This conjecture remains open in general, although it is known to hold for some families of graph classes, such as paths, \textit{spiders}, \textit{caterpillars}, \textit{2-caterpillars}, and any graph with minimum degree $\delta \ge 23$ \citep{bonato2021survey}.

The first integer linear programming (ILP) formulations for the GBP are introduced in \citep{garciadiaz2022graph}. In \citep{pereira2024graphburning}, the authors propose an exact algorithm for the GBP that improves the results from \citep{garciadiaz2022graph}. Their work introduces a decision covering IP model, namely \texttt{GBP-IP}, that is capable of verifying whether a given graph $G$ can be burned in $k$ steps. The resolution of such a model is then embedded within a dichotomic search for the best value of $k$. Their approach makes use of the \texttt{BFF} heuristic, introduced in \citep{garcia2022burning} to provide efficient upper bounds for the $b(G)$. However, exact approaches still struggle to produce good results when the size of the graph increases. Therefore, finding ways to reduce the search space represents a central challenge.

\subsection{Our contribution}

In this paper we introduce a new ILP formulation for the GBP. In order to boost its performance, we propose new dominance rules and symmetry-breaking techniques that allow reduce the search space. In order to further reduce symmetry, we introduce a perturbation of the objective function that ensures preservation of optimality. To accelerate the solution of the perturbed model, we also propose a pruning rule that eliminates nodes that are not relevant for exploration.
Our approach allows us to identify and eliminate redundant variables, as well as to guide the search of the Branch-and-Bound enumeration tree towards promising nodes. This significantly improves the computational performances as demonstrated by our numerical experiments.

The paper is organized as follows. Section 2 presents a formal definition of GBP. Section 3 introduces the proposed mathematical formulation. In Section 4, we discuss symmetry and dominance rules in combinatorial optimization, as well as their role in the GBP. Section 5 describes two dominance rules specifically proposed for the GBP. Section 6 presents a perturbation of the objective function to further reduce the formulation's symmetry, along with a pruning rule to speed up optimality convergence. Experimental results are reported and discussed in Section 7. Finally, Section 8 concludes the paper.

\section{The Graph Burning Problem}

Let $G=(V,E)$ be a connected graph, where $V$ denotes its set of vertices and $E$ the set of edges. In this paper, we will only consider the case where $G$ is undirected but all our results can be easily extended to the directed case.

Suppose that in the burning process of $G$, vertex $v_i$ is chosen to be ignited at step $i \in \mathbb{N}$. Such vertex is called the \emph{i-th fire source}. At time step $j \geq i$, the set of vertices that become burned due to the propagation of fire source $v_i$ are the ones at distance at most $j-i$ from $v_i$, that is $\{ u \in V : d(v_i,u) \leq j-i\}$, where $d(v_i,u)$ denotes the distance between vertices $v_i$ and $u$. Notice that this corresponds to the ball of radius $j-i$ centered at $v_i$.

A valid solution to GBP is called a \textit{burning sequence}. A burning sequence is an ordered subset of vertices $\{v_1,v_2,\dots,v_k\} \subseteq V$ that can burn the whole graph by choosing $v_i$ to be ignited at step $i$, for $i=1, \dots, k$. It follows that $\{v_1,v_2,\dots,v_k\}$ is a burning sequence if and only if 
\begin{equation}\label{eq:cover_condition}
\bigcup_{i=1}^{k} B_{k-i}(v_i) = V,
\end{equation}
where $B_r(v)= \{ u \in V : d(v,u) \leq r\}$ denotes the ball of radius $r$ centered at $v$. The optimal solution to GBP is characterized by the burning sequence of shortest length. In this sense, GBP can be viewed as the \textit{covering problem} of finding the smallest integer $k$ and an ordered subset of $k$ vertices $\{v_1, \dots, v_k\}$ such that the set of balls of radius $k-i$ centered in $v_i$ is able to \textit{cover} the whole set of vertices $V$. 
\begin{figure}[H]
    \centering
    \includegraphics[width=0.50\textwidth]{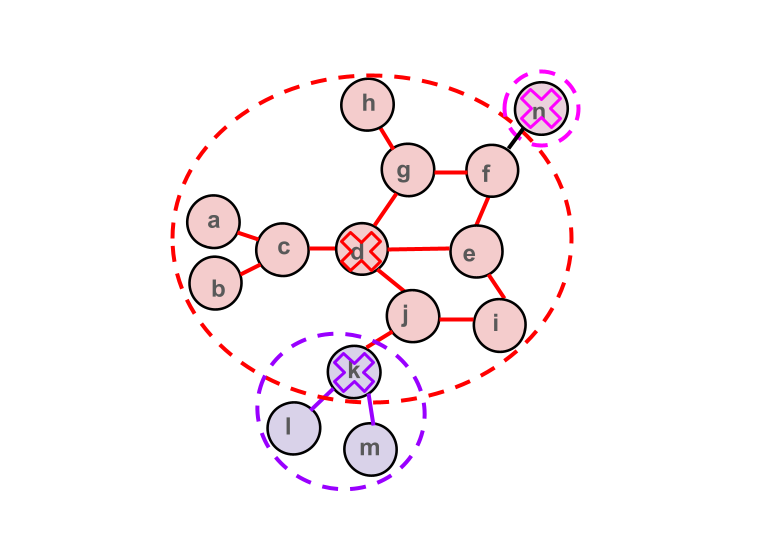}
    \caption{Illustration of burning sequence $\{d,k,n\}$. Crossed vertices correspond to ignitions and dashed zones to their associated balls.} 
    \label{fig:ex_burning_sequence}
\end{figure}

An example of burning sequence is presented in Figure \ref{fig:ex_burning_sequence}, where the entire graph is burned in three steps through the burning sequence $\{d, l, n\}$. In other words, the entire graph is covered using the associated balls $B_2(d)$, $B_1(k)$ and $B_0(n)$.

\section{Mathematical formulation}

We take the \texttt{GBP-IP} formulation proposed by \cite{pereira2024graphburning} as a starting point for our work. For completeness, we re-state it here below. 
\begin{align}
    \textnormal{Find} \quad & x_{v,i} \in \{0, 1\} & \quad & \forall v \in V, \forall i \in \{1, \dots, k\}\label{eq:per_1}\\
    \text{s.t.}\qquad   \sum_{i = 1}^k x_{v,i} &\leq 1 & \quad & \forall v \in V \label{eq:per_2} \\
    \sum_{v \in V} x_{v,i} & = 1 & \quad & \forall i \in \{1, \dots, k\} \label{eq:per_3}\\
    \sum_{i = 1}^k \sum_{u \in V : v \in B_{k-i}(u)} x_{u,i} &\geq 1 & \quad & \forall v \in V \label{eq:per_4}
\end{align}

This formulation is capable of checking whether or not there exists a burning sequence of fixed length $k$ for a given graph $G$. The binary variables $x_{v,i}$ indicate whether or not vertex $v \in V$ is the $i$-th source fire of the burning sequence for $i=1, \dots, k$. Constraints \eqref{eq:per_2} ensure that each vertex can only be ignite at most once. Constraints \eqref{eq:per_3} state that in each time step exactly one vertex is ignited. Finally, constraints \eqref{eq:per_4} guarantee that every vertex is burned within $k$ steps.

In their work, this formulation is repeatedly solved within a dichotomic search for the best value of $k$. In each iteration, \texttt{GBP-IP} is solved via a row generation algorithm that considers the covering constraints \eqref{eq:per_4} on demand. 
Next, we introduce an extended formulation based on \texttt{GBP-IP} that is capable of solving the GBP in a single run, thereby eliminating the need to embed it within a dichotomic search. It relies on selecting balls of decreasing radii that cover the entire graph $G$, while minimizing the largest radius used.
\begin{align}
    \min \quad & \tau \label{fo_1} \\ 
    \text{s.t.}\qquad \qquad \tau &\geq r \sum_{v \in V} x_{v,r}  & \quad & \forall r \in R \label{c_og_1} \\
    \sum_{v \in V} x_{v,r} &\leq 1 & \quad & \forall r \in R \label{c_og_3}\\
    \sum_{r \in R} \sum_{u \in V : v \in B_r(u)} x_{u,r} &\geq 1 & \quad & \forall v \in V \label{c_og_4}\\
    x_{v,r} &\in \{0, 1\} & \quad & \forall v \in V, \forall r \in R \label{c_og_5}\\
    \tau &\in \mathbb{Z} \label{c_og_6}
\end{align}

The binary decision variables $x_{v,r}$ are defined for every vertex $v \in V$ and available radius $r \in R$ and can be defined as follows.
\[
x_{v,r} =
\begin{cases}
1, & \text{if vertex } v \text{ is the center of a ball of radius } r, \\
0, & \text{otherwise}.
\end{cases}
\]
The set of available radii is defined as $R= \{0, \dots, UB\}$, where $UB$ denotes an upper bound on the largest radius that is required to cover the entire set of vertices. A natural choice for $UB$ is to use the upper bound derived from the \textit{Burning Number Conjecture}, that is $UB := \lceil \sqrt{|V|} \rceil$. However, this upper bound tends to be quite large when compared to $b(G)$ in general. Indeed, any good heuristic solution can be used to provide a good value for $UB$. For instance, a greedy heuristic that runs in $O(n^3)$ and yields a $(3 - 2/{b(G)})$-approximation ratio is proposed by \cite{garcia2022burning}.

The modeling variable $\tau$ corresponds to the largest radius of a ball used in the covering. Thus, the objective function aims to minimize the largest radius used and, consequently, the number of steps required to completely burn $G$. In an optimal solution, $\tau = b(G) - 1$. 

Constraints \eqref{c_og_1} indicate that $\tau$ is at least as large as the largest radius used. Constraints \eqref{c_og_3} ensure that for each value of radius, at most one ball can be selected. This implies that on each time step, at most one vertex can be ignited. Notice that, as opposed to the formulation of \citep{pereira2024graphburning}, we do not impose the ignition of a vertex at every time step. Indeed, such property can be relaxed without loss of optimality (see Lemma \ref{lemma:1}).
Constraints \eqref{c_og_4} guarantee that each vertex $v \in V$ must belong to at least one selected ball -- as otherwise, condition \eqref{eq:cover_condition} would not be satisfied.

\begin{lemma}\label{lemma:1}
    There exists an optimal solution to \eqref{fo_1}--\eqref{c_og_6} in which exactly one vertex is selected to be ignited at each time step $i \leq b(G) $.
\end{lemma}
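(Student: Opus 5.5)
Starting from any optimal solution $(x^*,\tau^*)$ of \eqref{fo_1}--\eqref{c_og_6}, we will modify it, without changing $\tau$, until every radius $r \in \{0,\dots,\tau^*\}$ carries exactly one ball; the objective value, and hence optimality, is preserved throughout. Under the correspondence $i = \tau^*+1-r$ between radii and time steps, this gives exactly one ignition at each step $i \le \tau^*+1$.

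First we pin down $\tau^*$. Since the $x$-variables are binary, \eqref{c_og_1} together with minimization forces $\tau^* = \max\{r : \sum_v x^*_{v,r}=1\}$. We then check that $\tau^* = b(G)-1$. Take a burning sequence $v_1,\dots,v_{b(G)}$. Setting $x_{v_i,b(G)-i}=1$ yields a feasible solution with $\tau=b(G)-1$, because the radii are distinct and the balls cover $V$ by \eqref{eq:cover_condition}. For this we need $b(G)-1\le UB$, which holds since $UB$ is a valid upper bound. Conversely, any feasible solution with value $\tau$ uses at most one ball per radius, all of radius at most $\tau$, and these balls cover $V$. Taking the centers as ignitions at steps $\tau+1-r$ burns the graph in $\tau+1$ steps. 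At steps with no selected ball we either ignite nothing or ignite an arbitrary unburned vertex, which can only help. Hence $b(G)\le \tau+1$, and so $\tau^*=b(G)-1$.

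Next comes the filling step. Let $r \le \tau^*$ be a radius with no selected ball. Pick any vertex $w$ that is not already a center, and set $x_{w,r}=1$. Then \eqref{c_og_3} remains satisfied for $r$, since it previously had no ball. Constraint \eqref{c_og_1} is unaffected because $r\le\tau^*$. Covering \eqref{c_og_4} is monotone, so adding a ball keeps it satisfied. Repeating this for every empty radius gives a feasible solution of the same value with exactly one ball of each radius $0,\dots,b(G)-1$, that is, exactly one ignition at each step $i\le b(G)$.

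The main obstacle is the availability of a fresh vertex $w$ in the filling step. This is needed only if one also wants to respect the analogue of \eqref{eq:per_2} (no vertex ignited twice), which the model does not impose, so reusing a center is in fact allowed. If distinct centers are desired, I would argue that at most $b(G)$ balls are used in total, and $b(G)\le|V|$ because burning one vertex per step burns $V$ in $|V|$ steps. If every vertex is already a center and an empty radius still remains, then fewer than $b(G)$ balls cover $V$, with all radii at most $\tau^*$. One can then shift balls to smaller-radius slots, since a ball of radius $r$ contains a ball of smaller radius with the same center, or simply reuse a center. I would present the simple reuse argument as the main proof and mention the distinct-center refinement only as a remark.
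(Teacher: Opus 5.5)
Your proof is correct and follows essentially the paper's argument. Both fill every empty radius below the top with an arbitrary vertex and observe that the top radius $b(G)-1$ cannot be empty; you make the latter rigorous by explicitly proving $\tau^*=\max\{r:\sum_v x^*_{v,r}=1\}=b(G)-1$, which the paper only uses implicitly. Your concern about finding a fresh center is resolved by simple counting: if some radius $r\le b(G)-1$ is empty, at most $b(G)-1\le |V|-1$ balls are in use, so some vertex is not yet a center. That is exactly the paper's ``non-ignited vertex'' choice, and no shifting of balls is needed.
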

\begin{proof}
    Suppose that in an optimal solution $(x^*, \tau^*)$ to \eqref{fo_1}-\eqref{c_og_6}, no vertex is chosen to be ignited at a given time step $1 \leq i \leq b(G)$. Let $r=b(G) - i$. This means $x_{v,r} = 0$ for all $v\in V$. If $r < b(G) - 1$, a solution of same cost can be trivially constructed by selecting any non-ignited vertex to be ignited at that time step $i$. If $r = b(G) - 1$, then the proposed solution $(x^*, \tau^*)$ is not valid from the definition of $b(G)$.
\end{proof}

Notice that our formulation also differs from the one of \citep{pereira2024graphburning} as it does not impose that a vertex can only be ignited once. Next lemma shows that if this is the case, we can easily construct a solution of same cost where each vertex is ignited at most once.

\begin{lemma}\label{lemma:2}
    There exists an optimal solution to \eqref{fo_1}--\eqref{c_og_6} in which each vertex is ignited at most once.
\end{lemma}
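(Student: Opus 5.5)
Starting from an arbitrary optimal solution $(x^*,\tau^*)$ of \eqref{fo_1}--\eqref{c_og_6}, I would transform it step by step, never increasing $\tau$ and never violating feasibility, until no vertex is the center of two selected balls. The key observation is that balls centered at the same vertex are nested: if $x^*_{v,r}=x^*_{v,r'}=1$ with $r<r'$, then $B_r(v)\subseteq B_{r'}(v)$. Hence the ball of radius $r$ adds nothing to the covering constraints \eqref{c_og_4}.

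The modification step is as follows. Take such a pair and set $x_{v,r}=0$, keeping $x_{v,r'}=1$. Every vertex covered by $B_r(v)$ is still covered by $B_{r'}(v)$, so \eqref{c_og_4} still holds. Constraints \eqref{c_og_3} hold trivially, since we only decreased variables. The right-hand sides of \eqref{c_og_1} can only decrease, so $\tau^*$ remains feasible and the objective is unchanged.

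Each such step strictly reduces $\sum_{v,r}x_{v,r}$, so repeating it terminates. The result is an optimal solution in which each vertex is the center of at most one selected ball, i.e.\ each vertex is ignited at most once.

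If one also wants the conclusion of Lemma~\ref{lemma:1} to hold simultaneously, the radius-$r$ slot freed by a deletion can be refilled. Choose a vertex that is not currently a center and set its $x_{\cdot,r}=1$; this is the same filling argument used in the proof of Lemma~\ref{lemma:1}. Such a vertex exists because the number of slots with $r\le\tau^*$ is $b(G)$, and $b(G)\le|V|$ holds trivially. Adding a ball only helps \eqref{c_og_4}, and since $r<r'\le\tau^*$ it does not affect \eqref{c_og_1}.

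The main obstacle is only this last compatibility point, that a fresh vertex is available, which is straightforward. The core argument is the nesting of balls with a common center.
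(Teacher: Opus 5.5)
Your proposal is correct and follows the same route as the paper. Both use the nesting $B_r(v)\subseteq B_{r'}(v)$ for $r<r'$, delete the smaller ball at a doubly used center without affecting feasibility or $\tau$, and optionally refill the freed radius slot as in Lemma~\ref{lemma:1}; your termination argument and your check that a non-center vertex exists for the refill are details the paper leaves implicit.
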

\begin{proof}
    Suppose that in an optimal solution to \eqref{fo_1}-\eqref{c_og_6}, a given vertex $v' \in V$ is chosen to be ignited twice. It follows that $v'$ is associated to two selected balls of different radii, say $r_1$ and $r_2$. W.l.o.g., let $r_1 < r_2 \leq \tau$. By definition, $B_{r_1}(v') \subseteq B_{r_2}(v')$, and hence a feasible solution of same cost (and that ignites $v'$ only once) can be obtained by simply setting $x_{v', r_1}$ to zero. Since only one ball can be selected at most for each radii, the new solution does not use any ball of radius $r_1$. Lemma \ref{lemma:1} we then be applied to produce a burning sequence of same length.
\end{proof}

Although the GBP can be solved in a single run with this formulation, it struggles to handle large instances efficiently. Therefore, it is necessary to investigate ways to reduce the problem's search space. To address this limitation, we explore two different techniques: dominance rules and symmetry-breaking techniques.

\section{Symmetry and dominance rules in combinatorial optimization} 

An ILP is said to be symmetric if its variables can be permuted without altering the structure of the problem \citep{margot2010symmetry}. When ILPs exhibit large symmetry groups, they become difficult to solve using traditional enumerative algorithms such as Branch-and-Bound (B\&B), even for relatively small instances. This is due to the fact that many subproblems in the enumeration tree become isomorphic, leading to unnecessary duplication of effort and slowing down convergence. 

In the case of GBP, symmetry arises from the graph structure and the dynamics associated with the fire propagation process.
Indeed, if the graph contains structurally equivalent vertices, they can be selected interchangeably as fire sources without affecting the outcome of the process. The uniform propagation of the fire implies that such choices lead to symmetric sets of burned vertices at each step, and therefore to different but equivalent burning sequences from the perspective of the objective function. 
Figure~\ref{fig:ex_symmetry} illustrates a simple example in which symmetry can be easily spotted. Each of the two solutions can be obtained from the other by permuting the variables, yielding the same objective function value.

\begin{figure}[H]
    \centering
    \includegraphics[width=0.50\textwidth]{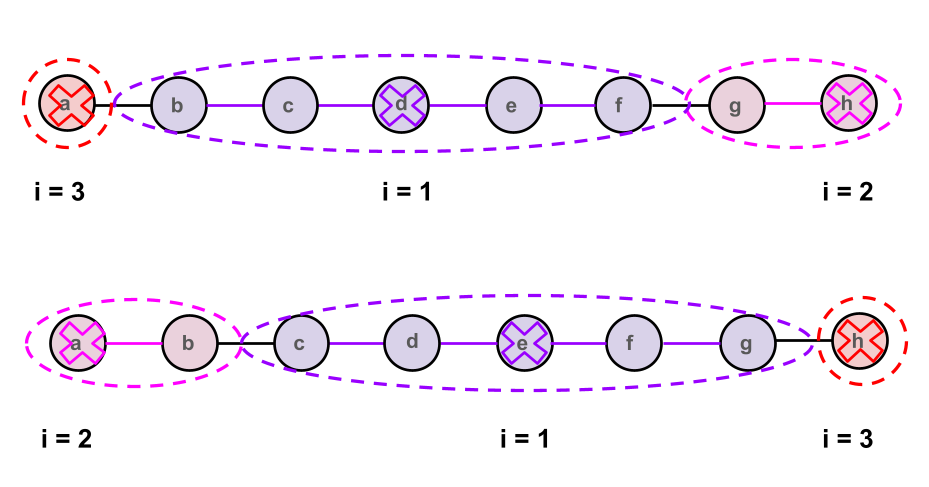}
    \caption{Example of symmetric solutions for GBP. Top solution represents the burning sequence $\{d, h, a\}$ while the bottom one is associated with $\{e, a, h\}$. Dashed zones show the vertices burned due to the ignition of the crossed vertex.} 
    \label{fig:ex_symmetry}
\end{figure}

Dominance rules play an important role in identifying ways to reduce the search space of combinatorial optimization problems. Their purpose is to identify relevant subsets of solutions within which it is sufficient to search for an optimal solution, thereby reducing the search space of combinatorial problems \citep{jouglet2011dominance}. 

While symmetry-breaking techniques are employed to ignore equivalent sets of solutions, dominance rules can be used to discard exploration of suboptimal solutions. Informally, a dominance rule consists of identifying, based on a certain property, a subset of solutions that contains at least one optimal solution. One can then solve the original problem by exploring only the reduced set of solutions. The application of a dominance rule can be integrated at different stages of the ILP solving process. Examples are variable fixing and/or elimination, introduction of (non-valid) constraints and custom management of branching rules.

An example of how efficient dominance rules are in combinatorial optimization is provided by \citep{falq2022dominance}, who derive neighborhood-based dominance inequalities that strengthen a compact MIP formulation for the Unrestrictive Common Due Date Problem, a simplified scheduling problem. Computational results show that these inequalities can drastically reduce the size of the branch-and-bound tree, with the number of explored nodes dropping from tens of thousands to only a few dozen in some instances. 

Another example appears in \citep{alber2004polynomial} where dominance rules are applied to the classic Dominating Set (DS) problem by means of a kernelization preprocessing. Notice that DS problem has its similarities with GBP. Indeed, DS can be viewed as a GBP where only balls of radius 1 are allowed, and the goal is to select the least amount of balls to cover the underlying graph. The dominance rule exploited is based on the neighborhood of vertices. Next, we use this same idea as a starting point and extend the dominance properties to the GBP.

\section{Dominance rules for the GBP}

In the case of the GBP, dominance can be exploited through the inclusion relation between balls. More specifically, let $v$ and $u$ be two vertices of $G$ and $r$ a given radius. If
\begin{equation}
    \label{eq:domination}
B_r(u) \subseteq B_r(v),
\end{equation}
then all vertices covered by $B_r(u)$ are also covered by $B_r(v)$. This means that the fire started at $u$ does not reach any additional vertex compared to the one started at $v$ within $r$ time steps. Consequently, it is reasonable to forbid the use of $B_r(u)$ if $B_r(v)$ is available.

\begin{theorem}[1DR]\label{th:1dr}
    Let $u,v \in V$ and $r \in R$ such that $B_r(u) \subset B_r(v)$. Then $x_{u,r}$ can be fixed to 0 without loss of optimality.
\end{theorem}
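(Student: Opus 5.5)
The plan is an exchange argument. Starting from any optimal solution, I will build another optimal solution with $x_{u,r}=0$. Let $(x^*,\tau^*)$ be optimal for \eqref{fo_1}--\eqref{c_og_6}. If $x^*_{u,r}=0$, there is nothing to prove. Otherwise $x^*_{u,r}=1$, and by \eqref{c_og_3} the ball $B_r(u)$ is the only ball of radius $r$ selected. Define $\bar x$ from $x^*$ by setting $\bar x_{u,r}=0$ and $\bar x_{v,r}=1$, leaving all other entries unchanged, and keep $\bar\tau=\tau^*$.

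Next I check feasibility of $(\bar x,\tau^*)$ constraint by constraint.
\begin{enumerate}[i.]
    \item For \eqref{c_og_3}, the sum over radius $r$ is still exactly $1$. Since $x^*_{u,r}=1$ was the unique radius-$r$ selection, we had $x^*_{v,r}=0$ (note $u\neq v$ because the inclusion is strict), and it is now $1$.
    \item For \eqref{c_og_1}, $\sum_{w}\bar x_{w,r}=\sum_w x^*_{w,r}$ for every $r$, so $\tau^*$ still satisfies the bound.
    \item For \eqref{c_og_4}, take any $w\in V$. If $w$ was covered by some selected ball other than $B_r(u)$, that ball is still selected. If $w$ was covered only by $B_r(u)$, then $w\in B_r(u)\subset B_r(v)$, so $\bar x_{v,r}=1$ covers $w$.
\end{enumerate}
The objective value is unchanged, so $(\bar x,\tau^*)$ is optimal with $\bar x_{u,r}=0$.

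The one subtlety is that $v$ may already be selected at another radius $r'\neq r$, so in $\bar x$ vertex $v$ is ignited twice. The model \eqref{fo_1}--\eqref{c_og_6} does not forbid this, so optimality within the model is unaffected. If one also wants a genuine burning sequence, Lemma \ref{lemma:2} removes the smaller of the two balls, and Lemma \ref{lemma:1} then refills the vacated radius. Two points need care here. First, if Lemma \ref{lemma:2} drops the radius-$r$ ball at $v$ (the case $r<r'$), we still have $x_{u,r}=0$, because the refilling vertex in Lemma \ref{lemma:1} can be any non-ignited vertex and we simply avoid choosing $u$. Second, the whole argument takes place inside a single optimal solution, so fixing several variables via the theorem simultaneously is a separate issue. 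That issue is already guarded against by the strict inclusion: strict inclusion is a strict partial order, so there are no cycles $u\to v\to u$.
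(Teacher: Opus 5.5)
Your proposal is correct and follows the paper's own argument: the paper's sketch makes exactly the swap $x_{u,r}:=0$, $x_{v,r}:=1$, leaves everything else unchanged, and notes that feasibility and cost are preserved. Your constraint-by-constraint check of \eqref{c_og_1}, \eqref{c_og_3}, \eqref{c_og_4} and your remarks on the possible double ignition of $v$ and on simultaneous fixing are sound additions the paper leaves implicit; for the last point, to be fully precise, swap $u$ directly for a maximal element above it in the finite strict order, whose variable is never fixed.
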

\begin{proof}[Proof sketch]
Let $(x^*, R^*)$ be an optimal solution to \eqref{fo_1}-\eqref{c_og_6} such that $x^*_{u,r} = 1$. Another solution can be easily constructed from $(x^*, R^*)$ by setting ${x}_{u,r} = 0$ and ${x}_{v,r} = 1$, while leaving all other variables unchanged. The new solution is feasible and has the same cost as the original one.
\end{proof}

Theorem \ref{th:1dr} can be extended to the case where the two balls cover exactly the same vertices, \textit{i.e.}, $B_r(u) = B_r(v)$. In this case, however, we need to ensure that at least one of the balls remains available. For this, we artificially impose a total order on the vertex set $V$ and allow the fixing of $x_{u,r}$ only if $u \prec v$. In this case, the rule does not eliminate a strictly dominated solution, but instead acts as a symmetry-breaking technique.

We propose exploiting \texttt{1DR} as a preprocessing step aimed at reducing the the number of decision variables before even invoking the solver. For this, we exhaustively look for every pair of vertices $u$ and $v$ in $V$, and available radius $r \in R$ that satisfy condition \eqref{eq:domination}. If $B_r(u) = B_r(v)$, we additionally verify if $u \prec v$. For all such cases, we eliminate variable $x_{u,r}$.

A natural direction is to extend this idea for balls of different radii. An interesting starting point in this direction is the observation of \citep{bonato2016burn} that a \textit{burning sequence} $(v_1, v_2, \dots, v_k)$, should satisfy
\begin{equation}
    \label{eq:domination2}
d(v_i, v_j) \geq j - i, \quad \forall\, 1 \leq i < j \leq k.
\end{equation}

This is based on the fact that if condition \eqref{eq:domination2} does not apply, it means that $v_j$ is already burning when ignited, and therefore the ignition of $v_j$ is useless. Interpreted from a dominance perspective, this suggests that balls of different radii may dominate each other depending on the distance between their centers. 

\begin{theorem}[2DR]\label{th:2dr}
    The following family of inequalities preserves optimality. 
    \begin{equation}\label{eq:2dr}
        x_{v,r'} + x_{u,r} \leq 1 \qquad \forall v \in V, u \in V, r \in R, r' \in R \textnormal{ such that } d(v, u) < r' - r 
    \end{equation}
\end{theorem}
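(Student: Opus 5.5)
Since (2DR) asserts that adding the family \eqref{eq:2dr} preserves optimality, it suffices to exhibit, for any optimal solution of \eqref{fo_1}--\eqref{c_og_6}, an optimal solution that satisfies every inequality of \eqref{eq:2dr}. The central geometric fact is the triangle inequality: if $d(v,u) < r'-r$, then for any $w \in B_r(u)$ we have $d(v,w) \le d(v,u) + d(u,w) < r'-r + r = r'$. Hence $B_r(u) \subseteq B_{r'}(v)$, and in fact $B_r(u)$ lies in the ball of radius $r'-1$ around $v$. So whenever both balls are selected, the ball $B_r(u)$ is redundant for the covering constraints \eqref{c_og_4}. The condition $d(v,u) < r'-r$ forces $r < r'$, so the two variables concern distinct radii and \eqref{c_og_3} is not involved in the conflict.

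The plan is an exchange argument driven by a potential function. Start from an optimal solution $(x^*,\tau^*)$. Among all optimal solutions, choose one that minimizes the number of selected balls $\sum_{v,r} x_{v,r}$, i.e.\ the number of ignitions. Suppose this solution violates some inequality of \eqref{eq:2dr}, so $x^*_{v,r'} = x^*_{u,r} = 1$ with $d(v,u) < r'-r$. Set $x_{u,r} := 0$ and leave every other variable unchanged.

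The new solution is feasible:
\begin{itemize}
\item constraints \eqref{c_og_4} still hold, because every vertex of $B_r(u)$ is covered by $B_{r'}(v)$, which remains selected;
\item constraints \eqref{c_og_3} only become looser;
\item constraints \eqref{c_og_1} still hold with the same $\tau^*$, since the right-hand side can only decrease.
\end{itemize}
The objective value is therefore unchanged, while the number of selected balls strictly decreases. This contradicts the minimality of the chosen solution, so that solution already satisfies all of \eqref{eq:2dr}. Equivalently, one can argue constructively: repeated deletion terminates because the number of selected balls strictly decreases and is bounded below.

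The remaining point is consistency with the earlier normalization results. Removing the ball of radius $r$ leaves a radius with no ignition, which corresponds to a time step without an ignited vertex. Lemma~\ref{lemma:1} states that such a gap can be filled without changing the cost. However, filling it might re-create a violation of \eqref{eq:2dr}, so I will not invoke Lemma~\ref{lemma:1} inside the argument. The statement only concerns the formulation \eqref{fo_1}--\eqref{c_og_6}, which explicitly allows empty radii via \eqref{c_og_3}. It is enough to note that the resulting solution is feasible and optimal for that formulation, and that its value $\tau^*$ still equals $b(G)-1$.

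I expect the main obstacle to be this bookkeeping: the modified solution must remain feasible without needing to be a ``full'' burning sequence, and the potential argument must not interact with Lemmas~\ref{lemma:1} and~\ref{lemma:2}. The geometric containment itself is routine.
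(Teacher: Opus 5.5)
Your proposal is correct and follows the same idea as the paper's proof: the containment $B_r(u) \subseteq B_{r'}(v)$ makes the smaller ball redundant, so $x_{u,r}$ can be set to $0$ without affecting feasibility or the value of $\tau$. You also make explicit two things the paper leaves implicit: the triangle-inequality derivation of the containment, and (by choosing an optimal solution with the fewest selected balls) the fact that a single optimal solution satisfies all inequalities of \eqref{eq:2dr} simultaneously.
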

\begin{proof}
    If $d(v, u) < r' - r$, then $B_{r}(u) \subseteq B_{r'}(v)$. It follows that if $B_{r'}(v)$ is chosen (\textit{i.e.}, $x_{v,r'} = 1$), then $B_{r}(u)$ is useless and can be safely ignored.
\end{proof}

We implement rule \texttt{2DR} by means of reinforcement cuts (\textit{i.e.}, user cuts) that are added to the formulation on demand and only when violated.

\section{Symmetry-breaking for the GBP}

The ILP defined by \eqref{fo_1}-\eqref{c_og_6} not only exhibits a high degree of symmetry, but also leads to many solutions that, while not symmetric, share the same objective function value.
It is not uncommon in linear programming for different solutions to yield the same objective value. However, in our case, this phenomenon is particularly pronounced. Indeed, the objective function depends only on the maximum radius required to cover the graph. This means that for a given value of $\tau$, a huge number of feasible solutions can usually be found. This slows down the progression of bounds during the B\&B resolution, which in its turn leads to a massive and redundant exploration of the search space.

Figure \ref{fig:motivation_perturbation} illustrates how different solutions may have same objective value using a small instance as example. Although simplified, it provides an initial intuition of the model's behavior, where all three solutions considered yield the same objective function value, namely \( \tau = 2 \). 
\begin{figure}[H]
    \centering
    \includegraphics[width=0.70\textwidth]{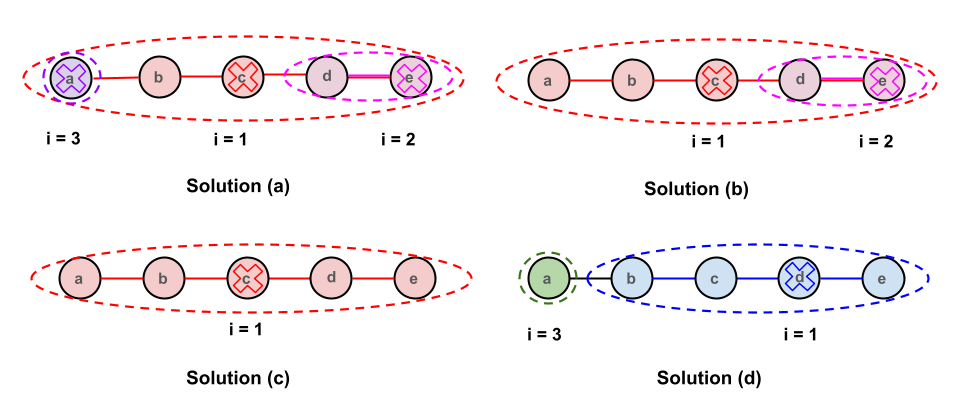}
    \caption{Different solutions yielding same value of objective function.}
    \label{fig:motivation_perturbation}
\end{figure}

To mitigate this effect and reduce the formulation's symmetry, we propose to perturb the model by replacing the objective function \eqref{fo_1} for the following one.
\begin{equation}\label{eq:pert_obj}
\min ~\ z= \sum_{r=0}^{UB}  2^r\left(\ \sum_{v \in V} x_{v,r}\ \right).
\end{equation}

With this new objective function, \textit{Solution (c)} would be the single optimal one -- with a value of $4$ -- for the instance depicted in Figure \ref{fig:motivation_perturbation}. Solutions (a), (b) and (d) would have objective values of $7$, $6$ and $5$, respectively.
The perturbation proposed relies on the following property:
\begin{equation}
\sum_{r=0}^{\bar{R}-1} 2^r = 2^{\bar{R}} - 1 < 2^{\bar{R}} \quad \textnormal{for any } \bar{R} \in \mathbb{Z_+}, \label{eq:property}
\end{equation}
which ensures that the cost associated with a given radius $\bar{R}$ exceeds the combined cost of all strictly smaller radii. Consequently, the objective value of a solution is determined primarily by its largest selected radius: any feasible solution with a strictly smaller maximum radius necessarily has a strictly smaller total cost. 

The replacement of the original objective function \eqref{fo_1} with \eqref{eq:pert_obj} also allows us to get rid of the modeling variable $\tau$ and its associated constraints \eqref{c_og_1}. The formulation defined by \eqref{eq:pert_obj}, \eqref{c_og_3}-\eqref{c_og_5} is hereafter denoted as the \textit{perturbed model}.

\begin{theorem}\label{prop:optimalite_modele_pertube}
    The perturbed model preserves the optimality of the formulation \eqref{fo_1}-\eqref{c_og_6}, in the sense that any optimal solution of the perturbed model uses a maximum radius equal to that of an optimal solution of formulation \eqref{fo_1}-\eqref{c_og_6}.
\end{theorem}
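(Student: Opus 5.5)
The argument is by contradiction, comparing the perturbed objective \eqref{eq:pert_obj} across solutions with different maximum radii through property \eqref{eq:property}. Both models share the feasible set \eqref{c_og_3}--\eqref{c_og_5} for $x$; the original model only adds $\tau$. For a feasible $x$, let $M(x)=\max\{r : \sum_{v} x_{v,r}=1\}$. By constraints \eqref{c_og_1} and the minimization, the optimal value of \eqref{fo_1}--\eqref{c_og_6} is $\tau^\star=\min_x M(x)$ over feasible $x$, and we know $\tau^\star = b(G)-1$. So it suffices to show that every optimal $\bar x$ of the perturbed model satisfies $M(\bar x)=\tau^\star$.

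First I record two bounds that hold for any feasible $x$ with $M(x)=m$. Constraints \eqref{c_og_3} allow at most one selected ball per radius, so the objective is $z(x)=\sum_{r\in S}2^r$, where $S\subseteq\{0,\dots,m\}$ is the set of radii used and $m\in S$. This gives the lower bound $z(x)\ge 2^m$ and, by \eqref{eq:property}, the upper bound $z(x)\le \sum_{r=0}^{m}2^r=2^{m+1}-1<2^{m+1}$.

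Now take an optimal $\bar x$ of the perturbed model and an optimal $x^\star$ of the original model, so $M(x^\star)=\tau^\star$. Since $x^\star$ is feasible for the perturbed model, $M(\bar x)\ge \tau^\star$ by the definition of $\tau^\star$. Suppose $M(\bar x)\ge \tau^\star+1$. Then $z(\bar x)\ge 2^{\tau^\star+1} > 2^{\tau^\star+1}-1\ge z(x^\star)$, which contradicts the optimality of $\bar x$. Hence $M(\bar x)=\tau^\star$. This also shows that $(\bar x,\tau^\star)$ is optimal for the original model, so the perturbed model loses no optimality.

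The only delicate point is the use of the at-most-one-ball-per-radius constraint \eqref{c_og_3}. Without it, the sum over a radius could exceed $2^r$ and the upper bound $z(x)<2^{m+1}$ would fail. Existence of optima is not an issue: $UB$ is a valid upper bound, so a feasible solution exists, and the feasible set is finite.
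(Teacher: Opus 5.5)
Your proof is correct and is essentially the paper's argument: the same sandwich bound $2^{m}\le z(x)\le 2^{m+1}-1$ obtained from \eqref{c_og_3} and \eqref{eq:property}, followed by the same two-case comparison with an optimal solution of \eqref{fo_1}--\eqref{c_og_6}. One small wording fix: $M(\bar x)\ge\tau^\star$ holds because $\bar x$ is itself feasible for the original model (with $\tau=M(\bar x)$), which is the paper's second case, whereas the feasibility of $x^\star$ in the perturbed model is what you need for the contradiction in the case $M(\bar x)\ge\tau^\star+1$.
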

\begin{proof}
Let $x$ be a feasible solution of the perturbed model, and let $\tau(x)$ denote the maximum radius used by this solution. The objective value associated with solution $x$ can therefore be written as
\[
 z(x) = 2^{\tau(x)} + \sum_{r=0}^{\tau(x) - 1}  2^r\left(\ \sum_{v \in V} x_{v,r}\ \right).
\]

Notice that constraints \eqref{c_og_3} impose that, for any feasible solution $x$, the term inside the parentheses in the expression above is at most 1 for any $r \in R$.
Using property \eqref{eq:property}, it follows that
\[
0 \leq \sum_{r=0}^{\tau(x) - 1}  2^r\left(\ \sum_{v \in V} x_{v,r}\ \right) \leq \sum_{r=0}^{\tau(x) - 1}  2^r < 2^{\tau(x)}.
\]
In particular, this implies 
\begin{equation}
    2^{\tau(x)} \le z(x) < 2^{\tau(x)+1} \label{eq:proof}
\end{equation}
for any solution $x$ satisfying \eqref{c_og_3}-\eqref{c_og_5}.
Now, let $x^*$ be an optimal solution of the perturbed model, and let $(x', \tau^*)$ be an optimal value of formulation \eqref{fo_1}-\eqref{c_og_6}. Let us show that $\tau(x^*) = \tau^*$.

Assume by contradiction that $\tau(x^*) \ge \tau^* + 1$. Then, from \eqref{eq:proof} we have that $z(x^*) \geq 2^{\tau(x^*)} \geq 2^{\tau^*+1}$. Notice however that $x'$ is also a feasible solution of the perturbed model, and therefore $z(x') < 2^{\tau(x') + 1} = 2^{\tau^*+1}$. Finally, we have $z(x') < z(x^*)$ which contradicts the optimality of $x^*$.

On the other hand, assume now that $\tau(x^*) \le \tau^* - 1$. Notice that $(x^*, \tau(x^*))$ is also a feasible solution of formulation \eqref{fo_1}-\eqref{c_og_6}, which contradicts the optimality of $(x', \tau^*)$.
\end{proof}

\begin{corollary}
    An optimal solution of the perturbed model implicitly verifies \texttt{2DR}.
\end{corollary}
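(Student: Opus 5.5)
We show that no optimal solution of the perturbed model can violate an inequality of \eqref{eq:2dr}; such a violation would let us delete one ball and strictly decrease $z$. Let $x^*$ be optimal for the perturbed model. Suppose, for contradiction, that there exist $v,u \in V$ and $r,r' \in R$ with $d(v,u) < r'-r$ and $x^*_{v,r'} = x^*_{u,r} = 1$. Since $d(v,u)\ge 0$, we get $r < r'$. In particular $r\neq r'$, so the two variables $x^*_{v,r'}$ and $x^*_{u,r}$ are distinct.

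The first step is the inclusion already used in the proof of Theorem~\ref{th:2dr}, which we check explicitly. Take $w \in B_r(u)$. By the triangle inequality,
\[
d(v,w) \le d(v,u) + d(u,w) < (r'-r) + r = r',
\]
so $w \in B_{r'}(v)$. Hence $B_r(u) \subseteq B_{r'}(v)$.

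Next, define $\hat{x}$ from $x^*$ by setting $\hat{x}_{u,r}=0$ and leaving every other entry unchanged. We verify that $\hat{x}$ is feasible for the perturbed model.
\begin{itemize}
\item Constraints \eqref{c_og_3} and \eqref{c_og_5} still hold, since we only decreased one binary variable.
\item For \eqref{c_og_4}, take any vertex $w$ that was covered by $B_r(u)$ in $x^*$. By the inclusion above, $w$ is also covered by $B_{r'}(v)$, and $\hat{x}_{v,r'} = x^*_{v,r'} = 1$ because $(v,r')\neq(u,r)$. Every other vertex keeps its covering ball unchanged.
\end{itemize}

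Finally, compare objective values: $z(\hat{x}) = z(x^*) - 2^r < z(x^*)$. This contradicts the optimality of $x^*$. Therefore every optimal solution of the perturbed model satisfies all inequalities \eqref{eq:2dr}, and does so without adding them explicitly.

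The only subtle point is that the dropped variable differs from the dominating one, which is exactly why $r<r'$ is needed. The strictness of the inequality $d(v,u)<r'-r$ gives us this, as it excludes $r=r'$ even when $u=v$.

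A secondary remark: Theorem~\ref{prop:optimalite_modele_pertube} is not needed for this argument. The strict positivity of every cost coefficient $2^r$ is what forbids redundant balls at optimality.
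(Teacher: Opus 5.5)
Your proof is correct and follows the same route as the paper: set the dominated variable $x_{u,r}$ to zero, use the inclusion $B_r(u)\subseteq B_{r'}(v)$ from Theorem~\ref{th:2dr} to keep the solution feasible, and observe that $z$ drops by $2^r$, contradicting optimality. Your explicit triangle-inequality check, and your observation that $r<r'$ so the removed variable differs from the dominating one, fill in details the paper leaves implicit.
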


\begin{proof}
    Let $x^*$ be an optimal solution to the perturbed model such that $x^*_{v,r'} = 1$ and $x^*_{u,r} = 1$ for some $v \in V, u \in V, r \in R, r' \in R$ where $d(v,u) < r' - r$. From Theorem \ref{th:2dr}, a feasible solution $x'$ can obtained from $x^*$ by setting $x'=x^*$ and $x'_{u,r}:=0$. The obtained feasible solution has $z(x') = z(x^*) - 2^r$, which contradicts the optimality of $x^*$.
\end{proof}

Hence, no optimal solution of the perturbed model contains a redundant ball. In particular, whenever a smaller-radius ball is dominated by a larger selected ball, the objective will favor the solution that omits the smaller-radius ball, since it has strictly lower cost. In contrast, in the original formulation \eqref{fo_1}-\eqref{c_og_6}, such solutions would have the same objective value, since the objective depends only on the largest radius selected, which remains unchanged.

\subsection{Pruning rule for the perturbed model}

To accelerate the convergence of the perturbed model, we introduce a pruning rule within the B\&B algorithm that exploits the structure of the objective function.

Within the exploration of a given node $n$ in the B\&B enumeration tree, let $z^{LB}(n)$ denote the value of its linear relaxation, let $\bar{x}$ denote the current incumbent solution (\textit{i.e.}, the best integer solution known so far), and let $\tau({x})$ denote the largest radius used in any feasible solution $x$. 

\begin{theorem}\label{th:prune}
    Let $n$ be a node in the B\&B enumeration tree associated with the perturbed model. If 
    \begin{equation}\label{eq:pruning_cond}
        z^{LB}(n) \geq 2^{\tau(\bar{x})},
    \end{equation}
then node $n$ can be safely pruned.
\end{theorem}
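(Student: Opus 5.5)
The plan is to show that no integer solution in the subtree rooted at $n$ can have a strictly smaller maximum radius than the incumbent $\bar{x}$. Since the goal of the perturbed model, by Theorem~\ref{prop:optimalite_modele_pertube}, is only to recover the optimal maximum radius, such a subtree cannot improve on what $\bar{x}$ already provides, and pruning it is safe.

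The first step is the standard B\&B fact. The linear relaxation at $n$ is a relaxation of the integer subproblem at $n$, so every feasible integer solution $x$ in the subtree of $n$ satisfies $z(x) \geq z^{LB}(n)$. Together with the pruning condition \eqref{eq:pruning_cond}, this gives $z(x) \geq 2^{\tau(\bar{x})}$.

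The second step uses the sandwich \eqref{eq:proof} established in the proof of Theorem~\ref{prop:optimalite_modele_pertube}: $2^{\tau(x)} \le z(x) < 2^{\tau(x)+1}$ for every $x$ satisfying \eqref{c_og_3}--\eqref{c_og_5}. Suppose some $x$ in the subtree had $\tau(x) \le \tau(\bar{x}) - 1$. Then $z(x) < 2^{\tau(x)+1} \le 2^{\tau(\bar{x})}$, which contradicts the first step. Hence every integer solution below $n$ has $\tau(x) \ge \tau(\bar{x})$. Its maximum radius, and therefore the number of burning steps it induces, is no better than that of the incumbent.

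The main obstacle is that this pruning rule is stronger than the usual bound test $z^{LB}(n) \ge z(\bar{x})$. Nodes may be cut even though they could contain solutions with a strictly smaller perturbed objective (same $\tau$, smaller lower-order terms). So I must argue carefully that "safely" means the optimal burning number is preserved, not that the perturbed optimum is. I would handle this as follows. If all such nodes are pruned, the final incumbent $\hat{x}$ still attains the minimum $\tau$ over the whole feasible set: any solution with smaller $\tau$ lies in some unpruned region or was never pruned by the argument above. Then $\tau(\hat{x}) = \tau^*$ by the argument of Theorem~\ref{prop:optimalite_modele_pertube}, and Lemmas~\ref{lemma:1} and~\ref{lemma:2} convert $\hat{x}$ into a burning sequence of length $b(G)$. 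If needed, I would also note that $\hat{x}$ may be post-processed by dropping dominated balls (as in the Corollary) without changing $\tau$.
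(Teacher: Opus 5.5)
Your proof is correct and follows the paper's argument essentially verbatim: the node relaxation bounds $z(x)$ from below for every integer $x$ in the subtree, and the sandwich $2^{\tau(x)} \le z(x) < 2^{\tau(x)+1}$ then rules out $\tau(x) \le \tau(\bar{x})-1$. Your extra discussion of the final incumbent goes beyond the paper's proof, which only proves this local claim and treats the cut-off perturbed optima in a remark; to make your global argument airtight, state that the incumbent's $\tau$ never increases during B\&B, because a strictly smaller $z$ forces a $\tau$ that is no larger, by the same sandwich.
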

\begin{proof}
We know that $z^{LB}(n)$ provides us a lower bound on the cost of any feasible solution $x'$ that may be found in a descendant node from $n$, that is, $z(x') \geq z^{LB}(n)$. Next we show that if $z^{LB}(n) \geq 2^{\tau(\bar{x})}$, then no feasible integer solution derived from this node can have a maximum radius strictly smaller than $\tau(\bar{x})$ (and therefore induce a shorter burning sequence than $\bar{x}$). In other words, we will show that $\tau(x') \geq \tau(\bar{x})$.

For this, let us recall that for any feasible solution $x$, $2^{\tau(x)} \leq z(x) \leq 2^{\tau(x)+1} - 1$ (see proof of Theorem \ref{prop:optimalite_modele_pertube}).
Now suppose $\tau(x') \leq \tau(\bar{x}) - 1$. Then,
\[
z^{LB}(n) \leq z(x') \leq 2^{\tau(x')+1} - 1 \leq 2^{\tau(\bar{x})} - 1 < 2^{\tau(\bar{x})},
\]
which contradicts the initial condition \eqref{eq:pruning_cond}.

Therefore, we have shown that $\tau(x') \geq \tau(\bar{x})$. This implies that node $n$ cannot lead to any improvement over the incumbent solution $\bar{x}$ and can therefore be safely pruned.
\end{proof}

We implement this rule by pruning each node of the B\&B tree whose relaxation bound $z^{LB}$ is greater than or equal to the threshold of $2^{\tau(\bar{x})}$ defined by the current incumbent solution $\bar{x}$. Notice that this pruning rule can indeed cutoff optimal solutions of the perturbed model. However, none of these cutoff solutions would lead to a shorter burning sequence. To illustrate it, consider again the simple case depicted in Figure \ref{fig:ex_symmetry}. If the incumbent solution $\bar{x}$ refers to \textit{Solution (a)}, then all other depicted solutions could be cutoff. Nevertheless, optimality is not lost with respect to the length of the associated burning sequence, and hence we maintain an exact resolution of GBP.

\section{Experimental Results}

In this section, we present the computational results obtained on the set of instances introduced in \citep{pereira2024graphburning}, restricted to graphs with at most 10,000 nodes. This yielded a test-set of 48 instances.

All implementations were carried out in \texttt{C++}, while the modeling and solution of the ILP were performed using the {Concert Technology} API from \texttt{CPLEX 22.1.2}. 
Reinforcement cuts from \texttt{2DR} are implemented using a {Generic Callback} within the {Relaxation} Context. The associated constraints are dynamically added as {user cuts} whenever a linear relaxation solution violates them. The pruning rule associated with Theorem \ref{th:prune} is implemented via a {Generic Callback} in the {relaxation} context.
Experiments were conducted on a \texttt{Dell Inspiron 13 5330} laptop equipped with an \texttt{Intel Core i7-1360P} processor (13\textsuperscript{th} generation) with {12} cores and {16} threads, and {16} GB of RAM. They were executed under \texttt{WSL 2} (version 2.6.3.0) using the \texttt{Ubuntu 22.04.5 LTS} distribution. The WSL environment had approximately \texttt{7.6} GB of memory available. For all experiments, no time limit was imposed. Each instance was therefore either solved to optimality or its resolution was aborted due to memory limitations.

All source code can be found in \citep{nicectp_graph_burning}. Due to the page limit the detailed results of our experiments have been omitted. They are nevertheless available in the same GitHub repository.

\subsection{Main Findings}

The correctness of the proposed ILP formulations and their implementations has also been verified empirically by comparing the optimal solutions obtained: for all solved instances, the optimal value of the maximum radius $\tau$ consistently matched $b(G)-1$.
We then evaluated the impact of the proposed techniques by comparing the performances obtained with the original model \eqref{fo_1}-\eqref{c_og_6} and the perturbed one defined by \eqref{eq:pert_obj}, \eqref{c_og_3}-\eqref{c_og_5}. For each, we tested 
\begin{enumerate}[i.]
    \item variable elimination by applying a preprocessing step based on \texttt{1DR} (see Theorem \ref{th:1dr});
    \item formulation reinforcement by applying \texttt{2DR} inequalities via user cuts (see Theorem \ref{th:2dr}); 
    \item pruning rule defined in Theorem \ref{th:prune} (only on perturbed model).
\end{enumerate} 

The preprocessing version of \texttt{1DR} eliminated, on average, approximately 82\% of all candidate variables, substantially reducing the model's size before optimization. These results correspond to approximately 83\% of the tested instances, since only 40 out of the 48 instances completed execution successfully, while the remaining instances were interrupted due to out of memory.

For the original model, approximately 23.0\% of the instances had the optimization aborted due to out of memory. Among the two tested techniques, the preprocessing version of \texttt{1DR} stands out as the most effective, solving approximately 81\% of the instances to optimality, improving performance on about 97\% of comparable instances, and reducing the number of CPLEX ticks by an average of 74.36\%. Only one instance showed degradation. The explicit addition of \texttt{2DR} as cuts had limited impact. While only approximately 65\% of the instances were solved to optimality, compared to the original model, approximately $90.3\%$ of the instances showed no measurable change, about $6.5\%$ exhibited marginal gains, and one instance showed a negligible degradation.


As for the perturbed model, the perturbation proved to be beneficial. Approximately 73\% of the instances were solved to optimality. However, two instances that were previously solved by the original model were terminated (\texttt{Killed}) in this configuration. Compared with the original model, the perturbed formulation improved performance on about $85.7\%$ of comparable instances, although a few became harder to solve due to an increased number of explored nodes. The pruning rule introduced for the perturbed model was also highly effective: compared to the perturbed model without any improvement technique, approximately $48.6\%$ of the instances showed improved performance, while $51.4\%$ remained unchanged, with no degradation observed. In several difficult cases, pruning drastically reduced the B\&B tree and even enabled the solution of the two instances that had previously been terminated.

The best overall configuration was obtained by combining the perturbed model, the pruning rule, and the preprocessing version of \texttt{1DR}, solving approximately 81\% of instances to optimality. This approach consistently outperformed the original model without any improvement technique across all comparable instances. In particular, it achieved a reduction in the number of CPLEX ticks in 100\% of the instances, demonstrating a uniform improvement with no performance degradation. On average, this configuration achieved a reduction of approximately 90.29\% in the number of CPLEX ticks.

Finally, we also tested a tighter upper bound for the admissible radii in the model. More specifically, we compared the model resolution using the upper bound for $b(G)$ from the heuristic of \citep{pereira2024graphburning}, instead of the bound derived from the Burning Number Conjecture. As expected, this tighter bound significantly improved performance on all instances, especially on larger ones. Indeed, a tigher bound directly implies less decision variables in the tested formulations. In several cases where the conjecture-based bound led to memory failure, the heuristic bound allowed the model to solve the instance successfully. However, the largest graphs with over 10,000 vertices still remained out of reach, which may be due to memory limitations in our computational setting.

\section{Conclusion}

This work introduced a new ILP formulation for the \textit{Graph Burning Problem}, based on covering the graph with burning balls of decreasing radii. Dominance rules and symmetry-breaking techniques were then proposed and implemented in order to reduce the search space of valid solutions. From a computational perspective, our results highlight two main conclusions. First, \texttt{1DR} is highly effective as a preprocessing technique: fixing redundant variables before optimization substantially reduces the search space and improves solver performance. Second, perturbing the objective function is particularly beneficial in this setting, especially when combined with the pruning rule proposed.
Our experiments also show that using a tighter heuristic upper bound for the admissible radii can be crucial on larger instances. In several cases, it turned otherwise unsolved instances into solvable ones. The overall performance was unequivocally enhanced, thereby demonstrating the robustness and relevance of the proposed methodologies.

Next steps include extending and/or lifting the family of \texttt{2DR} inequalities so that they may have a stronger impact on resolution performance. In order to deal with the lack of memory issues faced against very large instances, one could explore the idea of row generation, which has already proven its efficiency in the work of \citep{pereira2024graphburning}. Finally, the potential of \texttt{1DR} has only been partially explored for the moment. Indeed our work searches for dominated balls only as a preprocessing step. An interesting next step can be to consider this search within nodes of the Branch-and-Bound enumeration tree. This would allow to take advantage of the information issued from partial integer solutions and hence imply further variable fixings for children nodes.

~\\
\bibliographystyle{plainnat}
\bibliography{references}

\end{document}